\newtheorem{theorem}{Theorem}[section]
\newtheorem{proposition}[theorem]{Proposition}
\theoremstyle{definition}
\numberwithin{equation}{section}
\newcommand{\Z}{\mathbb{Z}}
\newcommand{\R}{\mathbb{R}}
\newcommand{\I}{\mathbb{I}}
\newcommand{\C}{\mathbb{C}}
\newcommand{\SM}{\mathcal{M}}
\newcommand{\SO}{\mathcal{O}}
\newcommand{\PGL}{\mathrm{PGL}}
\newcommand{\SU}{\mathrm{SU}}
\newcommand{\PSU}{\mathrm{PSU}}
\newcommand{\GL}{\mathrm{GL}}
\newcommand{\SL}{\mathrm{SL}}
\newcommand{\ad}{\mathrm{ad}}
\newcommand{\Pic}{\mathrm{Pic}}
\begin{document}
\baselineskip=15pt

\title[Higgs bundles, branes and Langlands duality]{Higgs bundles, branes and 
Langlands duality}

\author[I. Biswas]{Indranil Biswas}

\address{School of Mathematics, Tata Institute of Fundamental
Research, 1 Homi Bhabha Road, Mumbai 400005, India}

\email{indranil@math.tifr.res.in}

\author[O. Garc\'{\i}a-Prada]{Oscar Garc\'{\i}a-Prada}

\address{Instituto de Ciencias Matem\'aticas, c/ Nicol\'as
Cabrera, no. 13--15, Campus Cantoblanco, 28049 Madrid, Spain}

\email{oscar.garcia-prada@icmat.es}

\author[J. Hurtubise]{Jacques Hurtubise}

\address{Department of Mathematics, McGill University, Burnside
Hall, 805 Sherbrooke St. W., Montreal, Que. H3A 2K6, Canada}

\email{jacques.hurtubise@mcgill.ca}

\subjclass[2000]{14P99, 53C07, 32Q15.}

\keywords{Higgs bundles, pseudo-real form, branes, flat
connection, polystability.}

\date{}

\begin{abstract}
Given a compact Riemann surface $X$ equipped with
an anti-holomorphic involution and a complex semisimple
Lie group $G$ equipped with a real structure, we define anti-holomorphic
involutions on the moduli space of $G$-Higgs bundles over $X$. 
We describe how the various components of the fixed point locus match
up, as one passes from $G$ to its Langlands dual 
$^LG$. As an example, the case of $G\,=\, \SL(2,\C)$ and 
$^LG\,=\,\PGL(2,\C)$ is investigated in detail.
\end{abstract}

\maketitle

\tableofcontents

\section{Introduction} 

The moduli spaces of Higgs bundles on a Riemann surface $X$ are a very interesting source 
of mirror manifolds, in the sense of Strominger, Yau and Zaslow \cite{SYZ}. This 
was brought to light in successive papers of Hausel--Thaddeus \cite{HaT}, Hitchin 
\cite{Hi3}, and Donagi--Pantev \cite{DP}. The essential component of the geometric 
data for these manifolds, or rather their connected components, in this context is 
a pair of dual Lagrangian fibrations: two Lagrangian fibrations over the same 
base, with the two fibers over any general point of the base being dual tori.
It turns out that the moduli of Higgs pairs 
for the semisimple group $G$ and the moduli for its Langlands dual $^LG$ are 
mirrors in this sense (see \cite{GWZ} for more general results for $\text{SL}(n,{\mathbb C})$).

Kapustin and Witten, \cite{KW}, develop this duality extensively in their study of the geometric 
Langlands program, and as part of their study, highlight various interesting data that we can 
associate to these spaces, some of the most notable being {\it branes}. These are submanifolds, or 
more generally subvarieties which may have singularities, supporting a certain sheaf or gerbe, which come in two main types, 
{\it A-branes}, which are Lagrangian on their smooth locus, and {\it B-branes}, which are complex 
subvarieties.

In our cases, the moduli spaces under consideration are hyperK\"ahler, with 
complex structures $I,J,K$ of the following type. The $I$--structure is the natural one
for moduli space of pairs of the form $(E,\, \phi)$, where $E$ a holomorphic $G$-bundle and
$\phi$ is a form-valued section of the associated adjoint bundle, and the
$J$--structure refers to
the natural one on moduli space of pairs $(E,\,\nabla)$, where $\nabla$ is a flat
connection on a principal $G$--bundle $E$. Then $I\circ J\,=\, -J\circ I$, and $K$ is set
to be $I\circ J$. It is possible to 
have branes that are of type $A$ with respect to the $I$-structure, $B$ with 
respect to the $J$-structure, and so on. We will denote the type of our brane 
with respect to the $I,J,K$ structures by a triple, for example $(A,B,A)$.

A simple way of obtaining branes on the moduli space is to have them as the fixed point
sets of involutions. Indeed, this has been exploited with some success 
(\cite{BaS,BaS2,HS,BG,BGH,G,GR,GW}). In the Higgs case an early example, which has 
had a huge impact, was introduced by Hitchin \cite{Hi1}, who considered the simple 
holomorphic involution $(E,\,\phi)\,\longmapsto\, (E,\, -\phi)$, and showed that it gave, 
when moved into the $J$-picture, a real (i.e., anti-holomorphic) involution, whose 
fixed point sets were connections with values in a real form of the group $G$. In other words,
this would be a $(B,A,A)$-brane in the brane language. The general (physical) theory 
of branes in this context would tell us that in the mirror we would 
have a $(B,B,B)$-brane.

Another way of constructing involutions of the above hyperK\"ahler space
is to consider {\it real} Riemann surfaces,
that is Riemann surfaces $X$ equipped with an anti-holomorphic involution $\tau_X$.
Fixing an element $c$ of $\pi_1(G)$, denote by $\SM_c(G)$ the moduli space of Higgs
$G$--bundles of topological type $c$ on a real Riemann surface $(X,\, \tau_X)$.
Coupling $\tau_X$
with a real involution $\tau_G$ on our group $G$ gives natural involutions on the 
Higgs moduli space $\SM_c(G)$
$$
\tau_{\SM_c(G)}^\pm\,:\, {\SM_c(G)}\,\longrightarrow {\SM_c(G)}\, , \ \
(E,\,\phi)\,\longmapsto\,
(\overline{\tau_X^*E},\,\pm \overline{\tau_X^*\phi})\, .$$
The fixed point
locus of the involution $\tau_{\SM_c(G)}^+$ is an $(A,A,B)$-brane, while that of
$\tau_{\SM_c(G)}^-$ is an $(A,B,A)$-brane.

It is known that making a choice for the $\pm$, the actions are equivalent for all 
inner equivalent real structures on $G$ (see Section 3.3 of \cite{BGH2}), and so one 
is seeing a large number of components of our branes appearing, corresponding to 
different reductions of structure group over each real component of the curve, as 
well as other discrete data. Again, we refer to \cite{BGH2}.

An inner equivalence class of real structures on a group corresponds to an inner 
equivalence class of real structures on the Langlands dual group. This equivalence class, 
for example, is describable in terms of root data, which corresponds under 
Langlands duality, for example giving the same automorphisms of the Dynkin 
diagrams. Thus, on both sides of the mirror (meaning for $G$ and
$^{L}G$), there are $(A,A,B)$ branes for 
$(E,\,\phi)\,\longmapsto\, (\overline{\tau_X^*E},\, \overline{\tau_X^*\phi})$, and 
$(A,B,A)$ branes if the sign of the Higgs field is changed. These have been 
examined in some detail in \cite{BaS} and \cite{BG}.

It is thus legitimate to ask how the various components of $(A,A,B)$ and $(A,B,A)$ branes match 
up, as one passes from $G$ to $^LG$. This question can be asked in two different ways. Indeed, a 
first way is, as we have noted, to recall that the Higgs moduli spaces, under the $I$-complex 
structure for both $G$ and $^LG$ fiber over the same Hitchin base $B$, with (generic)
fibers dual 
Abelian varieties $A,\, A^\vee$, quotients of vector spaces $V,\, V^\vee$ by
mutually dual lattices $\Lambda, 
\Lambda^\vee$. A natural question that arises is
how the real structure interacts with these dual fibers, that is how 
the real components intersect the fibers. In some way, this is the most natural way of looking at 
the problem, as the description of the fibers involves an abelianization, and the Langlands 
duality is most manifest at the level of the Cartan subgroups. We will see that for the
degree zero (same as topologically trivial) connected components of the fibers, the real components for $G$ and $^LG$, which are translates 
of real tori, are in bijection, and that there is a fairly natural duality pairing between them once 
base sections of the Hitchin fibration are chosen. Indeed, for the lattices $\Lambda, 
\Lambda^\vee$ corresponding to $A,\, A^\vee$, there are 
sublattices $\Lambda_\pm, \Lambda^\vee_\pm$ on which the real structure acts by $\pm 1$, as well 
as projections $\pi_\pm\,:\, \Lambda \,\longrightarrow \,\Lambda_\pm/2$ and $\pi_\pm\,:\,
\Lambda^\vee\,\longrightarrow\, 
\Lambda^\vee_\pm/2$. The following theorem is proved (Theorem \ref{generalG} below):

\begin{theorem} 
The real components in the fibers $A, A^\vee$ are given by the $\Z/2\Z$ modules
$\frac{(\Lambda_-)/2} {\pi_-(\Lambda)} \times \frac{(\Lambda^\vee_-)/2} {\pi_-(\Lambda^\vee)}$. The 
pairing between $\Lambda$ and $\Lambda^\vee$ induces a perfect pairing 
$$\frac{(\Lambda_-)/2} {\pi_-(\Lambda)} \times \frac{(\Lambda^\vee_-)/2} {\pi_-(\Lambda^\vee)}
\,\longrightarrow\, \Z/2\Z$$
given by $(\lambda/2, \,\lambda^\vee/2)\,\longmapsto\, \lambda^\vee(\lambda)$ modulo $2$. 

The varieties $A ,\, A^\vee$ then have the same number of real components: 
that is, the real subvarieties of the corresponding Hitchin fibers for $G$ and $^LG$ all have the same number of real components, if they have real points. 
\end{theorem}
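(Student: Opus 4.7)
The plan is to compute the real locus of $A$ explicitly. After choosing a real section of the Hitchin base, identify $A \cong V/\Lambda$ so that the anti-holomorphic involution lifts to a $\Z$-linear involution $\sigma$ preserving $\Lambda$; set $\pi_\pm = \tfrac12(1\pm\sigma)$, so $\pi_\pm(\Lambda) \subset \tfrac12\Lambda_\pm$ because $(1\pm\sigma)\Lambda \subset \Lambda_\pm$. The congruence $\sigma(v) \equiv v \pmod\Lambda$ with $v = v_+ + v_-$ reduces to $2v_- \in \Lambda_-$, so the real locus of $A$ is the image of $V_+ \oplus \tfrac12\Lambda_-$, where $\Lambda$ acts by $(v_+, v_-) \mapsto (v_+ + \pi_+(\lambda),\, v_- + \pi_-(\lambda))$. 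Projection onto the second factor then gives a surjection onto $\tfrac12\Lambda_-/\pi_-(\Lambda) = (\Lambda_-)/2 / \pi_-(\Lambda)$ whose fibers (after using the $\pi_-$-freedom to fix $v_-$, the residual action is by $\Lambda_+$ on $V_+$) are the real tori $V_+/\Lambda_+$. The same analysis applied to the dual data yields the parametrization for $A^\vee$.

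Next I would set up the pairing. The Langlands convention for $\sigma^\vee$ is the transpose $\sigma^*$, i.e., the unique involution of $V^\vee$ for which the duality pairing is $\sigma$-invariant; a brief calculation then gives $V_\pm^\vee \perp V_\mp$, so the pairing restricts to a perfect one $V_\pm \times V_\pm^\vee \to \R$. For $\lambda^\vee \in \Lambda_-^\vee$ and $\mu \in \Lambda$, $\lambda^\vee(\sigma\mu) = (\sigma^\vee\lambda^\vee)(\mu) = -\lambda^\vee(\mu)$, hence $\lambda^\vee((1-\sigma)\mu) = 2\lambda^\vee(\mu)$; so shifting $\lambda \in \Lambda_-$ by $2\pi_-(\mu)$ changes $\lambda^\vee(\lambda)$ by an even integer. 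A symmetric computation handles the $\lambda^\vee$ variable, making $(\lambda/2, \lambda^\vee/2) \mapsto \lambda^\vee(\lambda) \bmod 2$ a well-defined pairing on the quotients.

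The main step is perfectness, which I would deduce from the lattice-duality identity $\pi_-(\Lambda)^* = \Lambda_-^\vee$ in $V_-^\vee$ (where $(\cdot)^*$ denotes the $\Z$-dual of a lattice). Indeed, for $v^\vee \in V_-^\vee$ and $\mu \in \Lambda$, the identity $v^\vee(\pi_-(\mu)) = \tfrac12(v^\vee(\mu) - (\sigma^\vee v^\vee)(\mu)) = v^\vee(\mu)$ (using $\sigma^\vee v^\vee = -v^\vee$) shows that $v^\vee$ pairs integrally with $\pi_-(\Lambda)$ iff $v^\vee \in \Lambda^\vee \cap V_-^\vee = \Lambda_-^\vee$. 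Now suppose $\lambda \in \Lambda_-$ satisfies $\lambda^\vee(\lambda) \in 2\Z$ for every $\lambda^\vee \in \Lambda_-^\vee$; then $\lambda^\vee(\lambda/2) \in \Z$ for every $\lambda^\vee \in \Lambda_-^\vee = \pi_-(\Lambda)^*$, forcing $\lambda/2 \in \pi_-(\Lambda)^{**} = \pi_-(\Lambda)$, i.e., the class of $\lambda$ vanishes in $(\Lambda_-)/2/\pi_-(\Lambda)$. The symmetric argument applied to $\Lambda^\vee$ gives non-degeneracy on the other side, so the pairing is perfect; in particular the two finite $\Z/2\Z$-modules have equal order, and hence the corresponding Hitchin fibers for $G$ and $^{L}G$ have the same number of real components.

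The main technical obstacle is the identity $\pi_-(\Lambda)^* = \Lambda_-^\vee$; its difficulty reduces to pinning down the correct convention for the Langlands-dual involution, but once this is recognized both well-definedness and perfectness fall out of a single lattice-duality computation. An alternative, more abstract framing would identify $(\Lambda_-)/2/\pi_-(\Lambda)$ with the Tate cohomology group $\hat{H}^{-1}(\langle\sigma\rangle, \Lambda)$ and invoke Tate duality for finite groups acting on $\Z$-free modules, but the direct lattice argument above seems to be the cleanest route.
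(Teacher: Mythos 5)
Your proof is correct, and for the decisive step --- perfectness of the pairing --- it takes a genuinely different route from the paper. The paper's proof invokes the classification of finitely generated $\Z$-torsion-free $\Z[\Z/2\Z]$-modules into trivial, sign and permutation (regular) summands, notes that a lattice and its dual admit the same decomposition type, observes that only the sign summands contribute a $\Z/2\Z$ to $(\Lambda_-)/2\big/\pi_-(\Lambda)$, and then checks perfectness summand by summand on the sign pieces. You bypass the structure theorem entirely: the single lattice-duality identity $\pi_-(\Lambda)^*=\Lambda^\vee_-$ inside $V^\vee_-$, together with the double-dual property of full-rank lattices, gives non-degeneracy on both sides at once, and the same computation $\lambda^\vee\bigl((1-\sigma)\mu\bigr)=2\lambda^\vee(\mu)$ yields well-definedness. (Your first paragraph, parametrizing the real locus by translates of $V_+/\Lambda_+$ indexed by $\tfrac12\Lambda_-/\pi_-(\Lambda)$, reproduces the proposition immediately preceding the theorem, which the statement takes as input; and both arguments rest on the convention, established in the paper's earlier proposition on dual involutions, that $\sigma^\vee$ is the transpose of $\sigma$.) What the paper's decomposition buys is that the rank of the component group is exhibited as the number of sign summands, which is precisely the quantity counted in the explicit cycle bases of the $\SL(2)$/$\PGL(2)$ example in Section~4; what your argument buys is independence from the integral representation theory of $\Z/2\Z$ --- in particular from the not-entirely-obvious assertion that a lattice and its dual have the same decomposition --- and a basis-free proof. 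Your closing identification of the component group with the Tate cohomology $\hat H^{-1}(\langle\sigma\rangle,\Lambda)$ is also correct and makes the duality statement a special case of a general principle, though the direct computation you give is self-contained and preferable here.
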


Of course one must calculate the effect of the real structures on these lattices; this can be a 
bit arduous, though in the end computable. We give in section four an example of this for the 
pair $G\,=\, \SL(2,\C)$ and $^LG\,=\, \PGL(2,\C)$ (and even then, just looking at one type of 
real structures on the base curve, those for which the quotient by the real structure is 
oriented). For $G\,=\, \SL(2,\C)$, this reproduces some results obtained by Baraglia and 
Schaposnik \cite{BaS}.

A second question is to ask what happens globally, on the full moduli
space of Higgs bundles; we will see, using 
results of \cite{BGH2} that in this case, even from the simple case of $G\,=\, 
\SL(2,\C)$ and $^LG\,=\, \PGL(2,\C)$, the real components are far from matching (see 
section 5 below):

\begin{proposition}
Let $k$ be the number of fixed circles on the base curve under the real structure, 
and suppose that the quotient of the base curve by the real involution is 
orientable. The fixed point set under the real involution in the $G\,=\, \SL(2,\C)$ 
Higgs moduli space has $2^{k-1}+1$ components. The fixed point set in the degree 
zero $\PGL(2,\C)$ Higgs moduli space has $3^k+1$ components.
\end{proposition}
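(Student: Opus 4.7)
The plan is to reduce the counting to the component-classification of $\mathrm{Fix}(\tau_{\SM_c(G)}^+)$ by discrete invariants developed in \cite{BGH2}, and then to enumerate those invariants separately for $G=\SL(2,\C)$ and $G=\PGL(2,\C)$. The key input is the general fact, established in \cite{BGH2}, that the fixed locus decomposes as a disjoint union of a ``real'' part, whose components are indexed by choices of reduction of structure group to a real form of $G$ over each of the $k$ fixed circles together with the induced local topological data, and a ``pseudo-real'' (quaternionic) part, which in the cases at hand is a single irreducible family. The orientability hypothesis on $X/\tau_X$ guarantees that no additional contribution from non-orientable pieces of the quotient surface arises, and it justifies treating the $k$ circles as independent sources of local invariants, so that the count has the form $N_{\mathrm{real}} + 1$.

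For $G=\SL(2,\C)$, the real forms in the inner class selected by the involution are $\SU(2)$ and $\SL(2,\R)$; on each fixed circle the reduction falls into one of these two classes, giving a priori $2^{k}$ combinatorial possibilities. Because $\SL(2,\C)$ is simply connected and we are imposing triviality of the underlying bundle, the product (sum in $\Z/2\Z$) of the Stiefel--Whitney-type local invariants along the $k$ circles is constrained to be trivial, so exactly half of these combinations are realised, giving $2^{k-1}$ real components. Adding the unique pseudo-real component yields $2^{k-1}+1$, recovering the count of Baraglia--Schaposnik \cite{BaS} as a cross-check.

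For $G=\PGL(2,\C)$ the relevant real forms are $\PSU(2)$ and $\PSL(2,\R)$; however, because $\pi_1(\PSL(2,\R))\cong\Z$, a $\PSL(2,\R)$-reduction on a circle carries an extra $\Z/2\Z$ invariant, so the local data on each circle has three admissible values rather than two. Since $\PGL(2,\C)$ is not simply connected, the degree-zero condition pins down the global Stiefel--Whitney class of the underlying bundle but imposes no further parity relation among the local invariants along the circles. This produces $3^{k}$ real components, and adding the single pseudo-real family gives the claimed $3^{k}+1$.

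The main obstacle I expect is the careful identification of the local invariants attached to a real circle: precisely, verifying that $\PSL(2,\R)$-reductions contribute three components per circle rather than two, while $\SL(2,\R)$-reductions of an $\SL(2,\C)$-bundle contribute only one beyond the $\SU(2)$ choice, and showing that the global cohomological constraint for $\SL(2,\C)$ halves the naive count while no analogous constraint survives in $\PGL(2,\C)$ in degree zero. Once the local-to-global machinery of \cite{BGH2} is in hand, the remaining content is a bookkeeping of $\mathrm{mod}\,2$ cohomology on the real surface, together with a check that the pseudo-real families for $\SL(2,\C)$ and for degree-zero $\PGL(2,\C)$ are each connected, which in turn follows from the irreducibility of the quaternionic Higgs moduli.
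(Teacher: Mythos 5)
Your overall strategy (reduce to the moduli of real/pseudo-real bundles and enumerate the Galois-cohomological invariants of \cite{BGH2}) is the paper's strategy, but the identification of the invariants is wrong in both cases, and the agreement with the stated numbers is partly coincidental. For $\SL(2,\C)$ the per-circle binary choice does \emph{not} select between $\SU(2)$ and $\SL(2,\R)$: the real/pseudo-real dichotomy is governed by a single \emph{global} class $c\in H^2(\Z/2\Z, Z_G)=\{\pm\I\}$, and for the real class $c=\I$ the two elements $h=\pm\I$ of $H^1(\Z/2\Z,G)$ on each circle both have stabilizer $\SU(2)$ --- they are the positive and the negative definite hermitian forms. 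The reduction from $2^k$ to $2^{k-1}$ is a global normalization of the overall sign of the hermitian metric, not a cohomological parity constraint: the stabilizers $\SU(2)$ and $\SU(1,1)\cong\SL(2,\R)$ are connected, so there are no Stiefel--Whitney classes at all for $\SL(2,\C)$ and no mod-$2$ relation to impose. The $\SL(2,\R)$-type reductions occur only in the pseudo-real class $c=-\I$, uniformly over all circles, and that class contributes the single extra component; your count lands on $2^{k-1}+1$ but indexes the components incorrectly.

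For $\PGL(2,\C)$ both of your key assertions contradict the paper's computation. First, $\PGL(2,\C)$ has trivial center, so $H^2(\Z/2\Z,Z_G)$ is trivial and there is no pseudo-real family: the extra unit in $3^k+1$ cannot arise that way. Second, the third local option on a circle comes from the disconnectedness of the stabilizer $\PGL(2,\R)$ (a generalized Stiefel--Whitney class in $\pi_0(\mathrm{Stab}(h_C))\cong\Z/2\Z$), not from $\pi_1$ of its identity component $\mathrm{PSL}(2,\R)$, and the degree-zero condition \emph{does} impose a parity constraint: the number of circles carrying a non-trivial Stiefel--Whitney class must be even. With that constraint the paper's recursion $n_k=3^{k-1}+n_{k-1}$ gives $(3^k+1)/2$, not $3^k$. (You should be aware that the body of the paper therefore disagrees with the $3^k+1$ stated in the Proposition; your derivation reproduces the stated number only by omitting the genuine parity constraint and inserting a pseudo-real component that does not exist for $\PGL(2,\C)$, so it cannot be taken as confirmation.) Finally, the passage from Higgs bundles to bundles, which you use implicitly, requires the statement that the real points of the cotangent bundle of the bundle moduli are dense in the real Higgs moduli with complement of real codimension at least two (via \cite[Theorem~II.6(iii)]{Fal}); this should be made explicit.
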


That 
the match fails, or at least is far from obvious, should not be too surprising --- even 
on the level of groups, the Langlands duality story is much more complicated in the 
real case; see, e.g. the work of Nadler (\cite{Na}).

\section{The complex Higgs moduli space}

We summarize here some aspects that we require of the work of a succession of 
authors \cite{Hi2, Fal, D1, D2, Sc, DG} on the nature of the Hitchin fibration for 
a complex semisimple group, as well as work of Donagi and Pantev \cite{DP} which 
gives the general description of how the fibrations for $G$ and $^LG$ match.

Let $X$ be a compact connected Riemann surface of genus $g$,
with $g\, \geq\, 2$, and let $G$ be a connected 
complex semisimple Lie group. The holomorphic cotangent bundle of $X$ will be 
denoted by $K_X$. Fix an element $c\,\in\, \pi_1(G)$ giving a topological type of 
principal $G$ bundle over $X$. Let $\SM$ be the moduli space of $G$-Higgs bundles 
$(E,\,\phi)$, where $E$ is a holomorphic principal $G$-bundle over $X$, and 
$\phi$, the Higgs field, is a holomorphic section of $\ad(E)\otimes K_X$, the 
tensor product of the associated adjoint Lie algebra bundle $\ad(E)$ with the 
canonical bundle $K_X$ of $X$. The pair must satisfy a stability condition (see 
\cite{BGo,GGM}). The moduli space has a natural holomorphic symplectic structure. 
We denote by $\SM_c$ the subvariety of $\SM$ consisting of $G$-Higgs bundles with 
fixed topological invariant $c\in \pi_1(G)$ of the $G$-bundle.

The $G$-invariant polynomial on the Lie algebra of $\mathfrak g$ are precisely
that $W$ invariant polynomials on $\text{Lie}(T)\,=\, {\mathfrak t}$, where
$T$ is a maximal torus of $G$ and $W\,=\, N_G(T)/T$ is the corresponding Weyl group. 
A $G$-invariant polynomial on the Lie algebra of $\mathfrak g$ of degree $k$ gives, when 
applied to $\phi$, a holomorphic section of $K_X^{\otimes k}$. Doing this for a set 
of generators of the invariant polynomials gives a map
\begin{equation}\label{hg}
h_G\, =\, h\,:\, \SM \,\longrightarrow\, B \,=\, \bigoplus_i H^0(X,K_X^{k_i})
\end{equation}
which Hitchin \cite{Hi2} 
shows to be an integrable system with base a vector space $B$ and fibers which are 
proper. On the complement of a discriminant locus $\Delta$ in $B$, as is shown in 
\cite{DP}, the connected components of the fibers $h^{-1}(b)$ of the complex Higgs 
moduli are in bijection with those of $\SM$, which in turn are classified by an 
element $c$ of $\pi_1(G)$; if we restrict $h$ to $\SM_c$, both fiber and total space 
are connected. The fibers $h^{-1}(b),\ b\,\in\, B-\Delta$, are torsors of Abelian 
varieties $A(G,b)$, which can be identified with generalized Prym varieties for the 
Weyl group $W$ of $G$, acting on a $W$-Galois ramified cover (a ``cameral cover'') 
$C(b)$ over the base curve $X$.

Now consider the case of $c\,=\,0$. In this case the fibration $h$ has a natural section, the 
Hitchin section, described in \cite{Hi2}, once we choose a holomorphic line bundle
$L$ with $L^2\,=\, K_X$ (such a line bundle is called a theta-characteristic).
Choosing a principal $\SL(2,\C)$ inside $G$, the principal $\SL(2,\C)$-bundle 
$L\oplus L^*$ gives a principal $G$-bundle $E_0$ by extending
the structure group using the inclusion of $\SL(2,\C)$ in $G$. The Kostant section
$k\,:\,{\mathfrak t}/W \,\longrightarrow\, \mathfrak g$, where $\mathfrak t$ is the
Lie algebra of a maximal torus $T\, \subset\, G$ and $W\,=\, N_G(T)/T$
is the Weyl group, induces a map $ B \,\longrightarrow\, H^0(X,\,\ad(E_0)\otimes 
K_X)$, and so a section of $h$. This section then gives a base 
point in each fiber $h^{-1}(b)$.

Donagi and Pantev proved the following for Higgs bundles of degree zero:

\begin{theorem}[{\cite{DP}}]\label{thmdp}
Take $G$ to be simple, and fix degrees to be zero. The Hitchin bases $B_G$ and $B_{^LG}$
for $G$ and $^LG$
respectively are isomorphic, with a map $I\,:\, B_G \,\longrightarrow\, B_{^LG}$
sending the complement $B_G-\Delta_G$ of the discriminant locus to the complement
$B_{^LG}-\Delta_{^LG}$. The map $I$ lifts to an isomorphism $\widetilde I$ of the
universal cameral covers for $G$ and $^LG$ Higgs bundles over $X$.

For $b\,\in\, B_G-\Delta_G$, fixing base points, the fibers $h_{G}^{-1}(b)$ and 
$h^{-1}_{^LG}(I(b))$ (see \eqref{hg}) are dual Abelian varieties:
$$ A(^LG,I(b))\,:=h^{-1}_{^LG}(I(b)) \, \,=\, A(G, b)^\vee\,=\,
h_{G}^{-1}(b)^\vee\, .$$
\end{theorem}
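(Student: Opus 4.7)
The plan is to follow the strategy of Donagi and Pantev: first identify the Hitchin bases and cameral covers of $G$ and $^LG$ using the shared Weyl group structure, and then use the perfect pairing between character and cocharacter lattices of a maximal torus to exhibit the corresponding Hitchin fibers as dual abelian varieties.

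First I would construct the map $I$. Langlands duality gives canonical identifications of the Cartan subalgebras $\mathfrak{t}_G$ and $\mathfrak{t}_{^LG}$, via an invariant form, intertwining the $W$-actions, and the fundamental degrees $k_i$ of the generators of the $W$-invariant polynomials are the same on both sides. Consequently the $X$-schemes $(\mathfrak{t}_G\otimes K_X)/W$ and $(\mathfrak{t}_{^LG}\otimes K_X)/W$ are canonically isomorphic, and passing to global sections gives the isomorphism $I\,:\, B_G\,\longrightarrow\, B_{^LG}$, which obviously sends $\Delta_G$ to $\Delta_{^LG}$. The cameral cover $C_G(b)\,\longrightarrow\, X$ is the pullback of the branched $W$-cover $\mathfrak{t}_G\otimes K_X\,\longrightarrow\, (\mathfrak{t}_G\otimes K_X)/W$ along $b$; since this $W$-cover is carried to the corresponding one for $^LG$ under the above identification, one obtains a canonical $W$-equivariant isomorphism $\widetilde I$ of universal cameral covers lifting $I$.

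Next I would invoke the generalized Prym description of the fibers due to Donagi--Gaitsgory \cite{DG} and Scognamillo \cite{Sc}. For $b$ off the discriminant, the connected component of the identity of $h_G^{-1}(b)$, with basepoint supplied by the Kostant/Hitchin section, is realized as the group of $W$-equivariant $T$-bundles on the cameral cover $C_G(b)$ that satisfy, for every root $\alpha$, a compatibility condition along the ramification locus of the reflection $s_\alpha$ expressed via the coroot $\alpha^\vee$. This is an abelian variety, built from the locally constant sheaf of cocharacters $X_{\ast}(T)$ pulled back along $C_G(b)\to X$ and twisted by the root data; restricting to degree zero picks out the identity component. The analogous description for $^LG$ on the same cameral cover uses cocharacters of $T^\vee$, which are by definition the characters of $T$.

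The duality statement is then a Fourier--Mukai / biduality argument: the perfect pairing $X^{\ast}(T)\otimes X_{\ast}(T)\,\longrightarrow\, \Z$ is precisely what converts the defining equivariance condition for the $G$-Prym into the condition dual to that for the $^LG$-Prym, and the Hitchin/Kostant sections on both sides pin down compatible basepoints so that the identification $A(^LG,I(b))\,=\, A(G,b)^\vee$ is canonical rather than only up to translation. The principal obstacle is precisely the careful bookkeeping of this last step: one must check that the root-theoretic equivariance data on $C_G(b)$ assembles into a sheaf of lattices over $X$ whose Fourier--Mukai dual recovers the corresponding sheaf for $^LG$, that the $K_X$-twist in the definition of the Hitchin map is handled consistently on both sides, and that the degree-zero restriction trivializes the $\pi_1(G)$-torsor discrepancies that would otherwise obstruct a clean duality. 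Once this verification is carried through, both the isomorphism of bases and cameral covers and the duality of fibers follow from the constructions above.
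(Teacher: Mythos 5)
This statement is quoted in the paper as a result of Donagi and Pantev (\cite{DP}); the paper itself supplies no proof, only the remark that the isomorphism of cameral covers is mediated by an identification $\mathfrak{t}\longrightarrow{}^L\mathfrak{t}$ coming from an invariant pairing. Your outline is a faithful summary of the actual strategy of \cite{DP}: identification of the bases and universal cameral covers through the invariant form on $\mathfrak{t}$ and the equality of the fundamental degrees, abelianization of the fibers via the Donagi--Gaitsgory/Scognamillo description as generalized Pryms of $W$-equivariant $T$-bundles on the cameral cover, and duality of those Pryms via the perfect pairing $X^{\ast}(T)\otimes X_{\ast}(T)\longrightarrow\Z$, with base points supplied by the Kostant/Hitchin section. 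So there is no conflict of approach with the source being cited.

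That said, as a proof your proposal has a genuine gap exactly where you locate the ``principal obstacle'': the duality of the generalized Pryms is asserted, not established, and it is the entire content of the theorem. In \cite{DP} this step is not bookkeeping: one must work with the sheaf of $W$-equivariant $T$-valued data on the cameral cover subject to the coroot conditions along the ramification divisors of the reflections $s_\alpha$, identify the Hitchin fiber with a torsor over its sheaf cohomology, and prove a duality theorem for these sheaves in which the local conditions at ramification points for $G$ are shown to be precisely annihilated by (and to annihilate) those for $^LG$ under the character--cocharacter pairing; for non-simply-laced $G$ the identification of $\mathfrak{t}$ with $^L\mathfrak{t}$ interchanges long and short roots and introduces a rescaling that must be tracked through the discriminants and the ramification conditions. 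None of this is carried out in your sketch. As an account of what \cite{DP} proves and how, your proposal is accurate; as a standalone proof it stops just before the substantive argument begins.
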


We note that the cameral covers for are Weyl-invariant curves in the total space 
of the tensor product $K_X\otimes {\mathfrak t}$ over $X$, and the isomorphism of 
the cameral covers is mediated through an isomorphism $ {\mathfrak t}\,\longrightarrow\, 
^L{\mathfrak t}$ (Cartan subalgebra of $\text{Lie}({^LG})$), given by an invariant pairing.

\section{The real structures}

Now let us suppose that $X$ is a real curve, that is the Riemann surface is
endowed with an 
anti-holomorphic involution $$\tau_X\,:\, X\,\longrightarrow\, X\, .$$
We also suppose that $G$ is endowed with a real form, an anti-holomorphic group
involution $$\tau_G\,:\, G\,\longrightarrow\, G\, .$$
These are classified up to conjugation by Cartan \cite{C} as follows. One has for $G$ the 
compact real form, given by an involution $\sigma$, which can be chosen to 
preserve a Cartan subgroup $T$; the other real forms can be obtained as 
compositions $J\circ\sigma$, where $J$ is an automorphism of order two, commuting 
with $\sigma$ and also preserving $T$. The real structures on $G$
can be classifies by the outer equivalence class of $J$; we will call the set of real
structures (up 
to conjugation by inner automorphisms of $G$) corresponding to a given outer 
equivalence class a {\em clique} (see \cite{GR} for details). The set of outer 
equivalence classes is in bijection with symmetries of the Dynkin diagram.

Given $\tau_G$, and a holomorphic principal $G$-bundle $E$ on $X$, for any
principal $G$-bundle $E$, define $\overline{E}$
as the associated $C^\infty$ principal $G$-bundle
$$\overline{E} \,=\, E\times^{\tau_G}G\, .$$
and now it is possible to combine this with the involution on the curve, giving an
involution on holomorphic principal $G$-bundles:
$$E\,\longmapsto \,\tau^*_X\overline{E}\, .$$
Note that this involution of the holomorphic principal $G$-bundles is anti-holomorphic.
The real structure $\tau_G$ induces a conjugate linear involution on the Lie
algebra. Consequently, there are the following two involutions on the Higgs fields:
$$\phi\,\longmapsto\, \pm\tau_{\mathfrak g}(\tau^*_X\phi).$$
 Thus, these involutions induce involutions on the Higgs moduli spaces:
 \begin{align}
\tau_{\SM_c}^\pm\,:\, \SM_c &\,\longrightarrow\, \SM_c\nonumber\\
 (E,\,\phi)&\,\longmapsto\, (\tau^*_X\overline{E},\,\pm 
\tau_{\mathfrak g}(\tau^*_X\phi))\, .\nonumber
 \end{align}

The involutions on $ \SM_c$ induced by outer equivalent real structures (i.e., one 
is the composition of the other with an inner involution: $\tau'= J\circ\tau, 
[J,\tau]=0$) are the same. Indeed, if the involution $J$ is realized by 
conjugating by an element $g_J$, with $g_J^2 \,=\, e$, and $g_J$ fixed by $\tau$, then
there is an isomorphism between the bundles $\overline{E}^\tau$ and 
$\overline{E}^{\tau'}$ defined by $(p,\,g)\,\longmapsto\, (p,\, g_Jg)$. This isomorphism 
intertwines the actions of $\tau, \tau'$.

On the level of bundles, for $g\,>\,2$, it was shown in \cite{BGH} that the components 
of the fixed points of the involution are characterized by topological data: the 
choice of a class in $H^1(\Z/2\Z,\, G)$ for each real component of the curve, possibly 
some generalized Stiefel--Whitney class for each real component of the curve, and a 
class in $H^2(\Z/2\Z,\, Z)$, where $Z$ is the center of $G$. The class in $H^1(\Z/2\Z,\, 
G)$, in particular, determines to which real structure $I\tau_G$ in the clique the 
bundle reduces over the real component.

The outer automorphism classes of $G$ and $^LG$ are the same. What is more, the 
real structure can be made to preserve a Cartan subgroup $T$ of $G$. On the dual 
Cartan subgroup $T^\vee$ of $^LG$, there is then a dual real structure. We then 
can choose a real form for $^LG$ whose restriction to $T^\vee$ is this dual real 
structure. We then have real structures, which are duals of each other:
\begin{align}
\tau_T: T&\longrightarrow T \nonumber\\
\tau_{T^\vee}: T^\vee&\longrightarrow T^\vee.\nonumber
\end{align}
These are restrictions of real structures on $G,\, ^LG$:
\begin{align}
\tau_G\,:\, G&\,\longrightarrow\, G\nonumber\\
\tau_{^LG}\,:\, {^LG}&\,\longrightarrow\, {^LG}\,.\nonumber
\end{align}
 Combining this with the involutions on $X$, and fixing topological invariants $c,c'$ in the respective fundamental groups (these elements are necessarily invariant under the real structure), we get involutions on the respective moduli spaces:
 \begin{align}
\tau_{\SM_c(G)}^\pm\,:\, {\SM_c(G)}&\,\longrightarrow\, {\SM_c(G)}\nonumber\\
\tau_{\SM_{c'}(^LG)}^\pm\,:\, {\SM_{c'}(^LG)}&\,\longrightarrow\, {\SM_{c'}(^LG)}\, .
\nonumber
\end{align}
When $c\,=\,c'\,= \,0$, these commute with the involution on the Hitchin base $B\,=\,
B(G) \,=\,B(^LG)$, which is intertwined by the map $I$ (see Theorem \ref{thmdp})
$$
\tau_{B }\,:\, {B }\,\longrightarrow\, B\, . 
$$

Over the fixed point locus on the Hitchin base, we have involutions on the Hitchin
fibers. Indeed, the involution on $X$, combined with that on the Lie algebra $\mathfrak t$ of $T$ (there is a sign here, depending on the $\pm$ on the involution on the Higgs fields), gives an involution on $K_X\otimes 
{\mathfrak t}$ which restricts to the cameral cover $C$ of $X$. This then induces involutions on the Hitchin fibers. The connected components of the Hitchin fibers are torsors over the Abelian varieties
$A(G,b)$, $ A(^LG,I(b)) \,= \,A(G, b)^\vee$ respectively, and if there is a fixed point under $\tau_{\SM_{c }(G)}$ on the fiber for $G$, then taking this as a base point, the involution on the fiber is given by the natural involution
$$
\tau_{A(G, b)}\,:\, {A(G, b)}\,\longrightarrow\, {A(G, b)}\, .
$$
There are similar involutions of the fiber of the Hitchin map for the dual group, in
particular:
$$
\tau_{A(^LG,I(b))}\,:\, {A(^LG,I(b))}\,\longrightarrow\, {A(^LG,I(b))}\, .
$$

\begin{proposition}
Suppose that we are at a real point $b$ on the base $B$; $I(b)$ is also a real point.
The involution $\tau_{A(G, b)}$ on the Abelian variety $A(G, b)$ is determined by the involution $\tau_C$ on the cameral curve, and the involution $\tau_T$ on the Cartan subgroup $T$

In particular, the involutions $\tau_{A(G, b)}$ and $\tau_{A(^LG,I( b))}$ are dual to
each other.
\end{proposition}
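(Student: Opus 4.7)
The plan is to use the description of $A(G,b)$ as a generalized Prym variety attached to the cameral cover $C = C(b) \to X$ and the torus $T$, as recalled in the setup of Theorem \ref{thmdp} (following \cite{DP,D1,D2,Sc,DG}). Concretely, one presents $A(G,b)$ as the identity component of a group of $W$-equivariant $T$-torsors on $C$ (or, equivalently, of suitably equivariant $T$-valued line bundles) subject to the root-sign conditions at the ramification of $C\to X$. The dual Abelian variety $A({}^LG,I(b))$ is given by the same construction with $T$ replaced by $T^\vee$ and the same cameral curve, via the isomorphism $\widetilde I$.

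First, I would write out the functoriality of this construction with respect to pairs $(\alpha,\beta)$ of an automorphism $\alpha$ of $C$ commuting with the $W$-action and a $W$-equivariant homomorphism $\beta$ of $T$: such a pair acts on $T$-torsors on $C$ by pulling back along $\alpha$ and pushing forward along $\beta$, and the action is anti-holomorphic precisely when one of $\alpha,\beta$ is anti-holomorphic. Applied to $\alpha = \tau_C$ (induced on the cameral cover from $\tau_X$ and the sign choice on $\mathfrak{t}$) and $\beta = \tau_T$, this produces an involution of $A(G,b)$. I would then check that this involution coincides with $\tau_{A(G,b)}$ as defined above by tracking a point of the Hitchin fiber through the abelianization: a Higgs bundle $(E,\phi)$ in the fiber corresponds, after choosing the base point from the Kostant/Hitchin section, to a $T$-torsor on $C$, and the involution $(E,\phi) \mapsto (\tau_X^*\overline E,\pm \tau_{\mathfrak g}(\tau_X^*\phi))$ translates under this correspondence to pullback-pushforward by $(\tau_C,\tau_T)$. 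This gives the first assertion.

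For the duality, I would invoke Theorem \ref{thmdp}: under $\widetilde I$ the cameral cover and its $W$-action are preserved, so $\tau_C$ is the same involution on both sides. The identification $A({}^LG,I(b)) = A(G,b)^\vee$ comes from the perfect pairing between $T$ and $T^\vee$, and by construction the real structure $\tau_{T^\vee}$ is the real structure dual to $\tau_T$ under this pairing. Combining naturality of the Prym duality with the description of the involutions as induced from $(\tau_C,\tau_T)$ and $(\tau_C,\tau_{T^\vee})$ respectively, one concludes that $\tau_{A({}^LG,I(b))}$ is the dual involution to $\tau_{A(G,b)}$ under the duality isomorphism $A({}^LG,I(b)) \cong A(G,b)^\vee$.

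The main obstacle I anticipate is the first step: matching the geometrically defined involution $\tau_{\SM_c(G)}^{\pm}$ with the abelianized pull-push involution $(\tau_C,\tau_T)_*$ on the generalized Prym. This requires a careful bookkeeping of the sign of $\tau_{\mathfrak g}$ on $\mathfrak t$ (fixing which cameral involution $\tau_C$ arises for the $\pm$ conventions), verifying that the Kostant section is compatible with the real structure so that the chosen base point in the fiber is fixed, and checking the Weyl-equivariance/root-sign conditions that cut out $A(G,b)$ inside the torus of equivariant torsors are preserved by $(\tau_C,\tau_T)$. Once these compatibilities are in place, both statements of the proposition follow formally from the naturality of Prym duality.
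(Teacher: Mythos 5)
Your argument follows essentially the same route as the paper's (much terser) proof: both identify the involution on $A(G,b)$ as induced from $\tau_C$ and $\tau_T$ via the description of $A(G,b)$ as a generalized Prym variety of $W$-equivariant $T$-data on the cameral cover, and both deduce the duality statement from the fact that the Prym duality is mediated through the pairing at the level of $\mathfrak{t}$. The one point the paper makes that you pass over is why $I(b)$ is itself a real point: this follows from choosing the invariant pairing on $\mathfrak{t}$ compatibly with the real structure, $\langle t,\,t'\rangle = \overline{\langle\tau_{\mathfrak t}(t),\,\tau_{\mathfrak t}(t')\rangle}$, which is possible by its uniqueness up to scale.
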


\begin{proof}
The uniqueness up to scale of the invariant pairing $\langle,\,\rangle$ on
$\mathfrak t$ tells 
us that it is possible to choose it so that $\langle t,\, t'\rangle\, =\,
\overline{\langle\tau_{\mathfrak 
t}(t),\,\tau_{\mathfrak t}(t')\rangle}$, and as the pairing defines the map $I$ this then implies that if $b$ is a real 
point, $I(b)$ is also. As the cameral covers are constructed inside $K_X\otimes 
{\mathfrak t}$, the isomorphism of cameral covers between $C(b)$ and $C(I(b))$ 
intertwines the real structure. Note that $A(G,b)$ is a Prym variety for the 
action of the Weyl group on $T$-bundles on $C$, and so the real structure on 
$A(G,b)$ is induced by that on $K_X\otimes {\mathfrak t}$. Since the duality of 
the Prym varieties is also induced through the duality at the level of ${\mathfrak 
t}$, the real structure on the dual variety $A(^LG,I( b))$ is the dual of the 
structure on $A(G, b)$.
\end{proof}

Let us write a real Abelian variety $A$ of dimension $n$ as the quotient of a vector space $V$ by a lattice $\Lambda$. The real involution $\tau_A$ lifts to an anti-linear involution $\tau_V$ on $V$, which restricts to an involution $\tau_\Lambda$ on $\Lambda$. The involution $\tau_V$ splits $V$ into $+1, -1$ eigenspaces $V_+, V_-$ of real dimension $n$. 
$$V= V_+ \oplus V_-$$
Let $\Lambda_+, \Lambda_-$ be the intersections of $\Lambda$ with $ V_+,V_-$. There are projections $\pi_+ = (\I + \tau_V)/2, \pi_- = (\I - \tau_V)/2$ onto $V_+, V_-$.

\begin{proposition}The components of the fixed point set of $\tau_A$ are classified by the $\Z/2\Z$ vector space 
$$\frac{(\Lambda_-)/2} {\pi_-(\Lambda)}$$
\end{proposition}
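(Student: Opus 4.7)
The plan is to lift to $V$, describe the fixed locus there, and then read off the components after quotienting by $\Lambda$.

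First I would characterize when a point $v+\Lambda\in A$ is fixed by $\tau_A$. Writing $v=v_++v_-$ with $v_\pm\in V_\pm$, the condition $\tau_V(v)-v\in\Lambda$ becomes $-2v_-\in\Lambda$. Since $\tau_V$ preserves $\Lambda$, an element $\lambda\in\Lambda$ with $\lambda\in V_-$ must in fact lie in $\Lambda_-=\Lambda\cap V_-$; consequently $\frac{1}{2}\Lambda\cap V_-=\frac{1}{2}\Lambda_-$, so the fixed locus lifts to the subset $V_+\oplus\frac{1}{2}\Lambda_-\subset V$, and the fixed locus in $A$ is
$$
F\;=\;\bigl(V_+\oplus \tfrac{1}{2}\Lambda_-\bigr)\big/\Lambda.
$$

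Next I would determine the connected components of $F$. Inside $V$, the set $V_+\oplus\frac{1}{2}\Lambda_-$ is a disjoint union of affine copies of $V_+$, one for each element of $\frac{1}{2}\Lambda_-$. Two such copies $V_++v_-$ and $V_++v_-'$ descend to the same component of $F$ if and only if there exists $\lambda\in\Lambda$ with $v_-'-v_-\equiv\lambda\pmod{V_+}$. Decomposing $\lambda=\lambda_++\lambda_-$ with $\lambda_\pm=\pi_\pm(\lambda)$ and projecting to $V_-$, this is equivalent to $v_-'-v_-=\pi_-(\lambda)\in\pi_-(\Lambda)$. Hence the set of components is the $\Z/2\Z$-vector space
$$
\frac{(\Lambda_-)/2}{\pi_-(\Lambda)},
$$
as claimed.

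To finish, I would verify that each fiber over this set of components really is connected: the stabilizer of a translate $V_++v_-$ under the $\Lambda$-action is $V_+\cap\Lambda=\Lambda_+$, so each component is the real torus $V_+/\Lambda_+$, which is connected. The only subtle point — and the one I would take most care over — is the identification $\frac{1}{2}\Lambda\cap V_-=\frac{1}{2}\Lambda_-$, which crucially uses that $\tau_V$ preserves $\Lambda$; everything else is a direct computation with the decomposition $V=V_+\oplus V_-$ and the projections $\pi_\pm$.
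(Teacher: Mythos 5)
Your proof is correct and follows essentially the same route as the paper: lift the fixed locus to translates of $V_+$ inside $V$, index them by their intersections with $V_-$, which land in $\tfrac{1}{2}\Lambda_-$, and then quotient by the translation action of $\Lambda$, which acts on these intersections through $\pi_-(\Lambda)$. The only (harmless) quibble is that the identity $\tfrac{1}{2}\Lambda\cap V_-=\tfrac{1}{2}\Lambda_-$ is immediate from the definition $\Lambda_-=\Lambda\cap V_-$ and does not actually hinge on $\tau_V$ preserving $\Lambda$ (that hypothesis is what makes $\tau_A$ well defined in the first place); otherwise your write-up is more detailed than the paper's, which omits the final connectedness check that each translate descends to a torus $V_+/\Lambda_+$.
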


\begin{proof}
The real components on $A$ lift to subspaces of $V$, all translates of $V_+$, and 
determined by their intersection with $V_-$. This intersection is given by $\pi_-(v) 
\,=\, (v-\tau_V(v))/2$, for $v$ in one of these components. Since $v$ under 
projection to $A$ is fixed by $\tau_A$, it follows that $ v-\tau_V(v)$ lies in 
$\Lambda_-$. On the other hand, if the component passes through the origin in $A$, 
then it follows that $v\,\in\, \pi_-(\Lambda)$.
\end{proof}

Now let us consider two real Abelian varieties $A$ and $A^\vee$, with involutions $\tau_A, 
\,\tau_{A^\vee}$ and dual lattices $\Lambda,\,\Lambda^\vee$.

\begin{theorem}\label{generalG}
The pairing between $\Lambda$ and $\Lambda^\vee$ induces a perfect pairing 
$$\frac{(\Lambda_-)/2} {\pi_-(\Lambda)} \times \frac{(\Lambda^\vee_-)/2} {\pi_-(\Lambda^\vee)}
\,\longrightarrow\, \Z/2\Z$$
given by $(\lambda/2, \,\lambda^\vee/2)\,\longmapsto\, \lambda^\vee(\lambda)$ modulo $2$. 

Thus, the varieties $A(G,\, b),\, A(^LG,\, I(b))$ have the same number of real components.
 
The real fixed locus for the Hitchin fibers in topological charge zero have the same number of real components. 
\end{theorem}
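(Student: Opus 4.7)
The plan is to first establish the perfect pairing (the first assertion); the remaining two statements then follow formally, since the preceding proposition identifies the number of real components of $V/\Lambda$ with $|(\Lambda_-)/2/\pi_-(\Lambda)|$, while Theorem~\ref{thmdp} together with the preceding proposition identifies $A(G,b)$ and $A(^LG, I(b))$ as dual real Abelian varieties in the required sense.

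For well-definedness I would use the compatibility of dual real structures $\langle\tau_V v,\tau_{V^\vee} v^\vee\rangle=\langle v,v^\vee\rangle$, which rearranges to $\lambda^\vee(\tau_V\mu)=(\tau_{V^\vee}\lambda^\vee)(\mu)$; for $\lambda^\vee\in\Lambda^\vee_-$ this is $-\lambda^\vee(\mu)$. Shifting a representative $\lambda$ by $2\pi_-(\mu)=\mu-\tau_V\mu$ thus alters $\lambda^\vee(\lambda)$ by $2\lambda^\vee(\mu)\in 2\Z$, and a symmetric calculation handles a shift of $\lambda^\vee$, so the pairing descends.

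Non-degeneracy is the main obstacle. I would first identify $(\Lambda_-)/2\,/\,\pi_-(\Lambda)$ with the Tate cohomology $\hat{H}^1(\Z/2\Z,\Lambda)=\Lambda_-/(\I-\tau_V)\Lambda$ via $\lambda/2\leftrightarrow\lambda$, and then invoke the classical classification of $\Z[\Z/2\Z]$-modules that are $\Z$-free: any such $\Lambda$ decomposes as a direct sum of copies of three indecomposables, namely $\Z^+$ (trivial action), $\Z^-$ (sign action), and the regular representation $\Z[\Z/2\Z]$. Each is self-dual as a $\Z[\Z/2\Z]$-module, so $\Lambda^\vee$ admits a matching decomposition with the same multiplicities, and the perfect integer pairing $\Lambda\times\Lambda^\vee\to\Z$ splits as the corresponding direct sum of standard pairings. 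Direct computation gives $\hat{H}^1(\Z/2\Z,\Z^+)=0$, $\hat{H}^1(\Z/2\Z,\Z[\Z/2\Z])=0$ (the latter being an induced module), and $\hat{H}^1(\Z/2\Z,\Z^-)=\Z/2\Z$. The induced pairing therefore reduces to the standard non-trivial pairing $\Z/2\Z\times\Z/2\Z\to\Z/2\Z$ on each matched pair of $\Z^-$ summands, and is thus perfect.

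The essential structural input is the classification of integer $\Z/2\Z$-representations (Diederichsen's theorem); an alternative would be a direct Tate-duality cup-product argument to $\hat{H}^2(\Z/2\Z,\Z)=\Z/2\Z$. Either way, well-definedness is formal, the classification or duality is the real work, and the equality of numbers of real components for the Hitchin fibers in topological charge zero then follows by applying the perfect pairing to $\Lambda=H_1(A(G,b),\Z)$ and $\Lambda^\vee=H_1(A(^LG,I(b)),\Z)$.
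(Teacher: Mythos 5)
Your proposal is correct and follows essentially the same route as the paper: both arguments rest on the decomposition of $\Lambda$ as a $\Z[\Z/2\Z]$-module into trivial, sign, and permutation (regular) summands (Diederichsen's theorem, cited in the paper as \cite[Theorem~74.3]{CR}), the self-duality of each indecomposable type, and the observation that only the sign summands contribute a $\Z/2\Z$ to $(\Lambda_-)/2\big/\pi_-(\Lambda)$, on which the pairing is visibly perfect. Your explicit well-definedness computation and the Tate-cohomology packaging $\hat H^1(\Z/2\Z,\Lambda)\cong \Lambda_-/(\I-\tau_V)\Lambda$ merely make precise steps the paper leaves as "one checks."
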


\begin{proof} 
One checks that the pairing is well defined. The involution makes $\Lambda, \Lambda^\vee$ into 
$\Z/2\Z$-modules over the integers. These modules decompose into sums of three different types of 
modules: the trivial module of rank one, the sign module, also of rank one, and the permutation 
module, of rank two (see \cite[p.~507, Theorem~74.3]{CR}). A module and its dual have the same 
decomposition. We find that each factor contributes either nothing to the quotient $\frac{(\Lambda_-)/2} 
{\pi_-(\Lambda)}$, for the trivial and permutation module, or a $\Z/2\Z$ for the sign module. The 
rank of $\frac{(\Lambda_-)/2} {\pi_-(\Lambda)}$ is then the number of sign representations in the 
decomposition. It is now
straight-forward to check explicitly for the sign module that the pairing is perfect.
\end{proof}
 
\section{An example: $\SL(2)$ and $\PGL(2)$}
 
We now consider the Higgs moduli for $\SL(2,{\mathbb C})$ and $\PGL(2,{\mathbb C})$, both to 
illustrate the preceding theorem for the Hitchin fibers. The computations of the numbers of real components for the $\SL(2,{\mathbb C})$-space are also found in the paper of Baraglia and Schaposnik \cite{BaS}, in somewhat different form, though in the end, of course, the gist is the same.

 {\it The complex moduli spaces.} 
 The structure of the $\SL(2,{\mathbb C})$ and $\PGL(2,{\mathbb C})$ moduli spaces is well known, 
and what follows is already in Hitchin \cite{Hi1}, \cite{Hi2}.

For $\SL(2,{\mathbb C})$, the moduli space is connected, and is given as a space of holomorphic 
pairs
$$
\SM_{\SL(2)} \,=\, \{ (E, \,\phi)\,\mid\, {\rm rank}(E)\,=\, 2, \,\wedge^2 E \,=\,{\mathcal 
O}_X, \,\phi\,\in\, H^0(X,\,{\rm ad}(E)\otimes K_X)\}\, ,$$
where ${\rm ad}(E)\,\subset\, \text{End}(E)$ is the subbundle of rank three defined by the
sheaf of endomorphisms of trace zero. There is a semi-stability condition \cite{Hi1}; we note 
however that we are only considering pairs with a smooth spectral curve. As destabilizing requires 
a sub-object, the spectral curve would have to have components for the pair to be unstable, and so 
as long as we only consider smooth spectral curves (the complement of the discriminant locus), all our objects will 
be stable.

For $\PGL(2,{\mathbb C})$ the moduli space is 
$$\SM_{{\rm PGL}(2)} \,=\,
\{ (E, \,\phi)\,\mid\, {\rm rank}(E)\,=\, 2, \,\phi\,\in\, H^0(X,\,{\rm ad}(E)\otimes K_X)\}/
(E\,\sim\, E\otimes L)\, ,$$
where $L$ runs over holomorphic line bundles on $X$. The moduli space $ \SM_{\PGL(2)}$ has two
components, corresponding to whether the degree of $E$ is odd or even. Again, with a smooth
spectral curve, our pairs will be stable. 

These moduli spaces are both of complex dimension $6g-6$, and fiber over the same Hitchin base. As
$\phi$ has trace zero, the conjugation invariant functions are generated by the determinant, and
$\det(\phi)$ is a quadratic differential. The Hitchin base in both cases is then 
$$B\,=\, H^0(X,\,K^{\otimes 2}_X) \,=\, \C^{3g-3}\, .$$
The cameral covers in these cases are also spectral curves, given at a quadratic differential $b$ as
a curve $S$ over $X$ in the total space of $K_X$ satisfying $\eta^2-b(x)\,=\,0$, where $\eta$ is the
tautological section the pullback of the line bundle $K_X$ to the total space of $K_X$. The curve has a natural
holomorphic involution $i$, given by 
$$i(\eta,\,x) \,=\, (-\eta,\,x)\, ;$$
thus $S$ is a two to one branched Galois cover of $X$, with $\pi\,:\,S\,\longrightarrow \,X$
denoting the projection. The discriminant locus $\Delta$ in this case is simply the variety of
quadratic differentials with multiple zeros. 

The Hitchin fiber for $\SL(2, {\mathbb C})$, for $b$ in $B-\Delta$ is a given by the variety of
line bundles $L$ on $S$ satisfying $\wedge^2\pi_*L \,=\, 
{\SO}_X$; it is a torsor for the Prym variety
$$A_{\SL(2)} \,= \,\{L\,\in\, \Pic^0(S)\,\mid\, L\otimes I^*L \,=\, {\SO}_S \}\, .$$
The Hitchin fiber for $\PGL(2, {\mathbb C})$ at $b$ is also a torsor, this time for the Abelian variety
$$A_{\PGL(2)} \,=\, \Pic^0(S)/\Pic^0(X)\, .$$
These are dual Abelian varieties.

 {\it Adding in a real involution}.
There are a variety of real structures on a curve, distinguished by the number of 
fixed (real) circles, and whether or not the quotient surface is orientable. As this 
is an illustrative example, we will consider just a type I curve, in the nomenclature 
of \cite{BHH}, that is a curve $X$ whose quotient $X_0$ by the real involution is 
orientable, and has $k$ fixed circles. The inequality $k\,\leq\, g+1$
holds; it is known as 
Harnack's inequality. The involution lifts to the canonical bundle, with, over the 
real circles, fixed points corresponding to real differential forms. It may
be noted that it is possible
to write 
the curve $X$ as the union of two oriented surfaces with boundary $X_0\bigcup 
\tau_X(X_0)$, along their common boundary of real circles.

For $\SL(2, {\mathbb C}),\, \PGL(2, {\mathbb C})$, unlike the other Lie groups in the $A$ series, there is only one clique of real structures; up to equivalence, there are two real structures, the first simply given by complex conjugation, and the second by composing conjugation with the inner automorphism given by the anti-diagonal matrix with entries $(1,-1)$. The real subgroups corresponding to these are, up to equivalence, $\SL(2,\R)$ and $\SU(2)$. We choose the real structure given by complex conjugation:
$$\tau_{\SL(2)}(g) \,=\, \overline{g}\, ,$$
with a similar formula for $\PGL(2, {\mathbb C})$.

If we now have a real Higgs bundle, that is invariant under the simultaneous 
action of the real structures on the group and its Lie algebra, and the curve, we 
have a spectral curve $S$ over $X$, cut out by $\eta^2-b(x) \,=\, 0$, with $b(x)$ 
a real quadratic differential. The zeroes of $b$ are either interchanged by the 
real structure, and so come in pairs, or lie on on real components of $X$. Again, 
as this is an example, we will just consider the case of the zeroes coming in 
pairs exchanged by the real structure. The more general case, when there are 
zeroes on the real curve, is discussed in \cite{BaS}.

The involution $\tau_X$ lifts to the spectral curve associated to a real
quadratic differential (because the real structure on $K_X$ is given by a
real structure on $X$):
$$\tau_S\,:\,S\,\longrightarrow\, S\, .$$
For the same reason, $\tau_S$ commutes with $i$. Therefore, the composition
$i\circ\tau_S$ is an anti-holomorphic involution of $S$. The real components of $X$, as they contain no zero of $q$, each lift to two 
circles on $S$. If, in a real trivialization, $q(x) \,>\,0$ on a real component, the two circles 
are fixed by $\tau_S$ and interchanged by $I$; if $q(x)\,<\,0$, then the two components are 
interchanged by both $\tau_S$ and $i$, and so fixed by $i\circ \tau_S$.

We will be concerned with the effect of the involution on the lattice $H^1(S,\, 
\Z)$, as well as the Prym sub-lattices and the dual quotient lattice; as we have 
seen, our result does not change if we consider the dual lattice $H_1(S,\, \Z)$ 
instead. We can give an explicit basis of cycles, which decomposes naturally under 
the action of $i,\, \tau_S$.
 
We start with the ``half-base curve'' $X_0$. This is an oriented surface of genus $h$, with $k$ 
real boundary circles. The genus $g$ of the surface $X$ is then $2h+k-1$. We write $X_0$ as a 
union of three pieces: the first, $X_{0,1}$ is a surface of genus $h$, with one boundary 
component; the second, $X_{0,2}$, is a cylinder; the third, $X_{0,3}$ is a punctured sphere with 
$k+1$ boundary circles, with $k$ of them being our real curves. The three components are glued 
sequentially.
 
We choose a standard homology basis $\alpha_j$, $j \,=\,1,\,\cdots ,\, 2h$, of 
$H_1(X_{0,1},\, \Z)$; let $\gamma_1,\,\cdots,\,\gamma_k$ denote the real boundary 
curves of $X_{0,3}$. The $\alpha_j$ and $\gamma_j$ generate the homology of $X_0$,
though one of the boundary cycles is redundant. The homology of $X_0$ relative to 
its boundary is generated by the $\alpha_i$, and paths $\eta_{i,j}$ joining 
$\gamma_i$ to $\gamma_j$; again there is redundancy in the latter, so that for 
example $\eta_{1,j}$, $j \,=\, 2,\,\cdots,\,k$ would suffice.

Now let us consider the branched cover $S_0$ of $X_0$. The real quadratic 
differential has $2g-2$ of its zeroes in $X_0$, and the other $2g-2$ zeroes in 
$\tau_X(X_0)$. Label the zeroes in $X_0$ as $b_1,\,\cdots,\,b_{2g-2}$; we can 
suppose that they are all in a disk $D$ living in the cylinder $X_{0,2}$. Put all 
the zeroes $b_1,\cdots,b_{2g-2}$ consecutively along a segment $L$ in $D$, and 
choose cycles $\beta_i$, $i \,=\, 1,\,\cdots,\,2g-4$, in $D$ surrounding the 
segment $(b_i,\, b_{i+1})$. One then has a branched cover $S_{0,2}$ over $X_{0,2}$, 
branching at the $b_i$, with $4$ boundary components. The full branched cover 
$S_0$ over $X_0$ is built from this by glueing two copies of $X_{0,1}$ to two of 
the boundary circles, and two copies of $X_{0,3}$ to the two others, in the 
obvious way.
 
The cycles $\beta_i$ lift to cycles $\widehat \beta_i$ on $S_0$, with 
$i_*(\widehat \beta_i)\,=\, - \widehat \beta_i$; the cycles $\alpha_i$ lift to cycles 
$\widehat \alpha_i, i_*(\widehat\alpha_i)$, the boundaries $\gamma_i$ lift to 
boundaries $\widehat\gamma_i, i_*(\widehat\gamma_i)$ and the relative classes 
$\eta_{i,j}$ to $\widehat\eta_{i,j},i_*(\widehat\eta_{i,j})$.
 
\begin{figure}\begin{center}
\includegraphics[width=12cm]{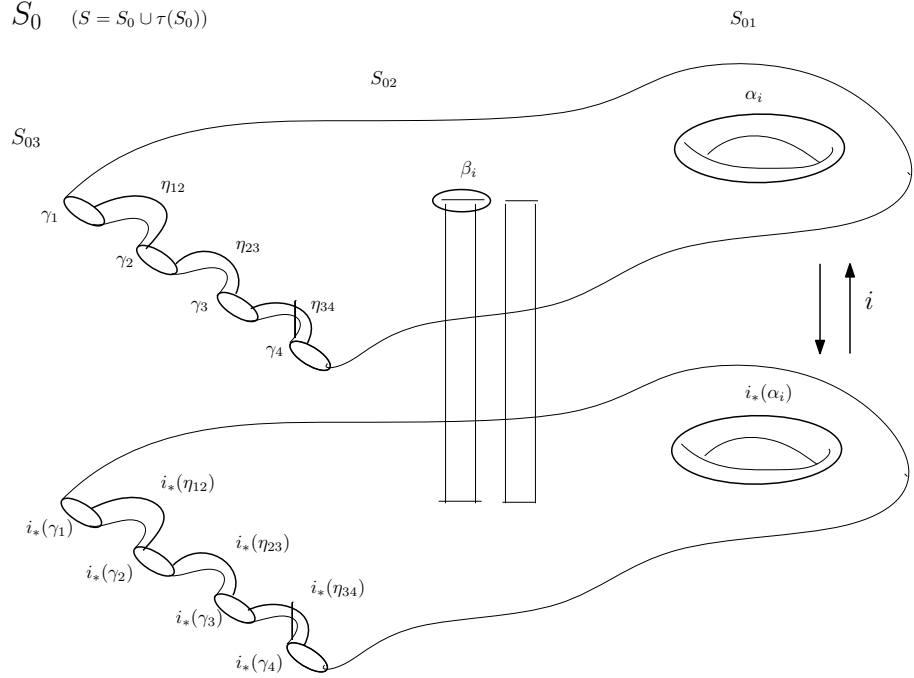}\caption{The ``half-spectral curve'' $S_0$.}\label{infinity}\end{center}\end{figure}
 
This then gives $S_0$. It must now be glued to its ``mirror image'' $\tau_S(S_0)$ 
along the real boundary circles. On each circle, one has that either 
$\widehat\gamma_i$ is glued to $(\tau_S)_*(\widehat\gamma_i)$, or to 
$i_*(\tau_S)_*(\widehat\gamma_i)$. Let us suppose that $\widehat\gamma_i$ is glued 
to $(\tau_S)_*(\widehat\gamma_i)$, for $i\,=\, 1,\,\cdots ,\,\ell$, and 
$\widehat\gamma_i$ is glued to $i_*(\tau_S)_*(\widehat\gamma_i)$ for $i\,=\, 
\ell+1,\cdots,k$. Again, as we are working through an example, we are supposing 
that $\ell$ is non-zero, and less than $k$.
 
The total curve $S$ can then be thought as five pieces lying over a line; the 
first, $S_{0,1}$ is a union of two copies of $X_{0,1}$; the second is our surface 
$S_{0,2}$, which has genus $g-2$, and four boundary components; the third is the 
union $S_{0,3}$ of two copies of $X_{0,3}$ with two copies of $\tau_S(X_{0,3})$. 
The hypothesis on $\ell$ tells us that $S_{0,3}$ is connected, with genus $2k-3$, 
and $4$ boundary components. The fourth and fifth components are the mirror images 
$S_{0,4}\,=\, \tau_S(S_{0,2}),\, S_{0,5}\,=\, \tau_S(S_{0,1})$.
 
One then has a basis of cycles on $S$:
 \begin{itemize}
\item Quadruplets of independent cycles $\widehat\alpha_j, i_*(\widehat\alpha_j), 
\tau_*(\widehat\alpha_j), \tau_*i_*(\widehat\alpha_j)$, $j= 1,\cdots,2h$.

\item Pairs of independent cycles $\widehat\beta_j, \tau_*(\widehat\beta_j)$ in 
$S$, with $i_*(\widehat\beta_j)= -\widehat\beta_j$, $j= 1,\cdots,2g-3$.

\item Pairs of independent real cycles $\widehat\gamma_j, i_*(\widehat\gamma_j)$, 
with $\tau_*(\widehat\gamma_j)=\widehat\gamma_j$, $j= 1,\cdots, \ell-1$.

\item Pairs of independent cycles $\widehat\gamma_j, i_*(\widehat\gamma_j)$, with 
$\tau_*(\widehat\gamma_j)=i_*\widehat\gamma_j$, $j= \ell+1,\cdots, k$.

\item An extra real cycle $\widehat\gamma_\ell$.

\item Pairs of independent cycles $\widehat\delta_j, i_*(\widehat\delta_j)$, $j= 
1,\cdots,\ell-1$, given by $\widehat\delta_j =\widehat \eta_{j, j+1} - \tau_* 
(\widehat\eta_{j, j+1})$, so that $(\tau_S)_*(\widehat\delta_j) = 
-\widehat\delta_j$.

\item Pairs of independent cycles $\widehat\delta_j, i_*(\widehat\delta_j)$, $j= 
\ell+1,\cdots, k-1$, given by $\widehat\delta_j =\widehat \eta_{j, j+1} - 
i_*\tau_* (\widehat\eta_{j, j+1})$, so that $(\tau_S)_*(\widehat\delta_j) = 
-i_*(\widehat\delta_j)$.

\item Finally, one takes $\eta_{\ell,\ell+1}$, and deforms $\tau( 
\eta_{\ell,\ell+1})$ to $\widetilde{\tau( \eta_{\ell,\ell+1})}$, for example 
moving it through $\tau(b_1)$, so that $\eta_{\ell,\ell+1} - \widetilde{\tau( 
\eta_{\ell,\ell+1})}$ lifts to a cycle $\widehat\delta_\ell$. The projection 
$\pi_*(\widehat\delta_\ell)$ of $\widehat\delta_\ell$ is anti-invariant under 
$\tau$, so $\pi_*(\widehat\delta_\ell+\tau_*(\widehat\delta_\ell)) =0$, giving 
$\widehat\delta_\ell +\tau_*(\widehat\delta_\ell) = \widehat\mu$, with 
$i_*(\widehat\mu) = -\widehat\mu$. Now note that there are the independent cycles 
$\widehat\delta_\ell, i_*(\widehat\delta_\ell), \widehat\mu$.
\end{itemize}
As a quick check, this gives $8g-6$ generators, while the genus of $S$ is $4g-3$. 
We have written our basis so that it is decomposed into representations of the 
group $\Z/2\Z \times \Z/2\Z$ generated by $\tau_S, i$, apart from the cycle 
$\widehat\gamma_\ell$ which in any case is fixed by $\tau$.
 
A a warm-up exercise, we can now compute the number of real components in the 
Jacobian of $X$ (this would give a count for real components in the $\GL(2, 
{\mathbb C})$ Higgs fiber). Indeed, as noted above, it is necessary to count the number of 
sign representations of the $\Z/2\Z$ generated by $\tau$ in our list above; these 
correspond to the cycles $\widehat\delta_j, j = 1,\cdots,\ell-1$, and their images 
under $i$ and the cycle $2\widehat\delta_\ell -\widehat\mu$, giving $2^{2\ell-1}$ 
components in all. This coincides with the count of \cite[Section 8.1]{BaS}.

{\it Real components of the Hitchin fiber: $\SL(2)$ and ${\rm PGL}(2)$ cases.}
We can now also look at the real connected components for the Prym variety for 
$S$, which is the Hitchin fiber for $\SL(2, {\mathbb C})$, and of the quotient 
variety ${\rm Pic}^0(S)/{\rm Pic}^0(X)$, which is the Hitchin fiber for $\PGL(2, {\mathbb 
C})$. We note first that by general results, the connectivity of the Hitchin fiber 
reflects that of the full moduli space, and so our complex Prym variety is 
connected. We are thus interested in applying our general theorem on the action of 
$\tau$ to the lattices $H^1(S,\,\Z)_{i,-}$, the sub-lattice of elements 
anti-invariant under $i$, and the quotient $H^1(S,\,\Z)/H^1(S,\,\Z)_{i,+}$; 
dually, to $H_1(S,\,\Z)/H_1(S,\,\Z)_{i,+}$ and $H_1(S,\Z)_{i,-}$.
 
We first look for the number of sign representations of $\tau$ on $H_1(S,\Z)_{i,-}$.
 Going back to our list of cycles, we find
 \begin{itemize} 
 \item The cycles $\alpha$ give permutation representations, and so contribute nothing.
 \item The cycles $\beta$ give permutation representations, and so contribute nothing.
 \item The cycles $\gamma$ give trivial representations, and so contribute nothing.
 \item The cycles $\delta_j -i_*\delta_j$ each give a sign representation.
 \end{itemize}
 
Thus, there are $2^k$ components in all. One can compare with the result of 
\cite[Theorem 38]{BaS} for the case when the spectral curve $S$ has no branch 
points on the real components.
 
Likewise, we may consider the action of $\tau$ on the quotient 
$H_1(S,\,\Z)/H_1(S,\,\Z)_+$, looking again for the sign representations in our bases 
of cycles. Again, it is the cycles $\delta_j$ which contribute, giving $2^k$ 
components in all, as outlined in our general result.

 {\it Variation in the number of components and real structures 
on vector bundles}
While the number of real components in the Hitchin fiber only depends on the base 
curve $X$ for the $\SL(2, {\mathbb C})$ and $\PGL(2, {\mathbb C})$ moduli, the 
number of real components for $\GL(2, {\mathbb C})$, i.e., the number of real 
components of the Picard variety on the spectral curve, does depend on the 
spectral curve and so does depend on where on the real locus of $B-\Delta$ one is; 
one has $2^{2\ell-1}$ components, where $2\ell$ is the number of real components 
of the spectral curve. (A simple example, by the way, of such a variation, is 
given by a family of elliptic curves $y^2 \,=\, x(x^2-a)$, which has two real 
components for $a\,>\,0$, and only one for $a\,<\,0$, with $a \,=\, 0$ 
corresponding to a singular curve.)

\section{Global invariants: $\SL(2,\C)$ and $\PGL(2,\C)$}

A question is to see how the enumeration corresponds to the global moduli of real 
Higgs pairs over the curve $X$. The real part of the moduli space of $G$-Higgs 
bundles contains the total space of the cotangent bundle of the real part of the 
moduli space of principal $G$-bundles as a dense open subset such that the 
complement is a closed real analytic subspace of (real) codimension at least two. 
Indeed, this follows from immediately the fact that the total space of the cotangent 
bundle of a moduli space of principal $G$-bundles is a Zariski open subset of the 
corresponding moduli space of Higgs $G$-bundles such that complement has (complex) 
codimension at least two \cite[Theorem~II.6(iii)]{Fal}. Therefore, it suffices to 
consider the moduli of bundles.
 
For real $G$ bundles, following a classification of \cite{BGH} in terms of Galois
cohomology, we have as data, following the choice of a real structure $\tau_G$ on
$G$ (note that a real structure should be considered for each clique):
\begin{itemize} 
\item The choice of a class $c$ in $H^2(\Z/2\Z,\, Z_G)$, which decides whether the 
bundle has a real or pseudo-real structure;

\item The choice over each real component $C$, of a class $h_C$ in 
$H_c^1(\Z/2\Z,\, G)$;

\item The choice over each real component, of a generalized Stiefel-Whitney class 
$w_1(C)$ in $\pi_0(Stab(h_C))$, where the action of $G$ on $h_C$ is by 
$h\,\longmapsto\, \tau_G(g)h g^{-1}$;

\item The choice of a global degree for the bundle, with the choice being 
constrained by the choice of Stiefel-Whitney classes.
\end{itemize}

The main result of \cite{BGH} is that once the topological data is fixed, there is 
a connected moduli space with the fixed data.

For $\SL(2,\C)$, with the real structure $\tau_{\SL(2)}(g) \,=\, (g^*)^{-1}$, the
following hold: 
that
\begin{itemize} 
\item There are the classes $c\,=\, \I, -\I$ in $H^2(\Z/2\Z,\, Z_G) \,=\, (Z_G)_\R/\tau_G(Z)Z$.

\item For $c\,=\,\I$, there are two classes in $h \,=\,\pm \I$ in $H^1(\Z/2\Z,\,G)$; for
$c\,=\, -\I$, there is a class $h \,=\, \begin{pmatrix} 
\sqrt{-1}&0\\0&-\sqrt{-1}\end{pmatrix}\,\buildrel{\mathrm def}\over{=}\, \sigma$ in $H_{-1}^1(\Z/2\Z,\, 
G)$.

\item For all of these cases, the stabilizers ( $\SU(2)$, $\SL(2,\,\R)$) are 
connected, so that there are no Stiefel-Whitney classes.

\item The bundles, being $\SL(2,\,\C)$ bundles, have degree zero.
\end{itemize}

Note that on each real curve, for $c\,=\,\I$, we are getting an $\SU(2)$ bundle with a 
hermitian form that is either positive or negative; one supposes one can normalize 
the sign globally, giving $2^{k-1}$ component to the bundle moduli space; for 
$c\,=\,-\I$, we are getting one component of $\SU(1,1)$ bundles; so in all there are 
$2^{k-1} + 1$ components.

For $\PGL(2,\C)$, with the real structure $\tau_{\PGL(2)}(g) = (g^*)^{-1}$:
\begin{itemize} 
\item There is a single class $c\,=\, \I$ in $H^2(\Z/2\Z,\, Z_G)$ (no center).

\item There are two classes $h \,=\,1$, $h \,=\, \sigma $ in $H^1(\Z/2\Z,\, G)$.

\item For $h\,=\,\sigma$, the stabilizer is $\PGL(2,\R)$, which has two components, and
so there is a (classical) Stiefel-Whitney class associated to each component of the
curve; for $h\,=\, \I$, the stabilizer is ${\rm SO}(3)$, and connected.

\item There is a global degree, with values in $\Z/2\Z$, which is determined by
the parity of the number of real components with non-zero Stiefel-Whitney
class, which as we are concerned with degree zero, must be even.
\end{itemize}

Counting these gives $(3^k+1)/2$ components for the moduli: for each real circle, either
$h\,= \,1$ or $h\,=\, \sigma$, and if $h\,=\, \sigma$, either a trivial or non-trivial 
Stiefel-Whitney class; in short, a reduction to either ${\rm SO}(3)$ or to $\PGL(2,\R)$, 
and in the latter case, oriented or not. This would give $3^k$ choices, but there 
is a parity constraint given by the global degree: there must be an even number of 
components with non-zero Stiefel-Whitney class. If $n_k$ is the number of 
possibilities, then we can assign the first $k-1$ choices arbitrarily, and if the 
parity constraint is satisfied for these, there are exactly two possibilities for the 
remaining choice; if it is not, there is only one. This gives an inductive formula 
$$n_k \,=\, 2n_{k-1} + 3^{k-1}-n_{k-1} \,=\, 3^{k-1} + n_{k-1}\, .$$ Solving the 
recursion gives $n_k\,=\, (3^k+1)/2$.

In the complex world this might seem paradoxical, as it gives more global 
components than components in the fiber. However, in our real situation, for a 
fixed fiber, i.e., a fixed spectral curve, at most $2^k$ of our components can 
intersect the fiber. Indeed, there are $\ell$ real components on the curve whose 
lifts $\gamma_i$ satisfy $\tau_*(\gamma_i) \,=\, \gamma_i$, and
there are $k-\ell$ components which satisfy 
$\tau_*(\gamma_i) \,=\, i_*\gamma_i$. For the latter case, it is not possible to have
${\rm PSU}(2)$ Higgs fields, as for these the eigenvalues are fixed by the real 
structure, and so is left with only two choices, $\PGL(2,\R)$ orientable or not, 
for these components. In the same vein, if $\tau_*(\gamma_i) \,= \,\gamma_i$,
then we can 
write a $\PGL(2,\R)$-bundle as a sum of real line bundles, which are either trivial 
or non-orientable. However, the determinant of the Higgs field has no zero along 
the curve, and so the determinant bundle is trivial. Thus the tensor product of 
the two eigenline bundles is trivial, and so they themselves are either both 
trivial or both non-orientable, showing the $\PGL(2,\R)$ bundle is orientable. Thus 
for these real components also there are only two choices, either $\PSU(2)$ or 
$\PGL(2,\R)$ orientable. In all, then, there are at most $2^k$ possibilities along 
each Hitchin fiber.

\section*{Acknowledgements}

We thank International Centre for Theoretical Sciences for hospitality while a part
of the work was carried out.

\end{document}